\documentclass[a4paper,11pt]{article}
 
\usepackage{latexsym,amssymb,amsmath,amsthm,mathrsfs}  
\usepackage[width=14.5cm]{geometry}

\newtheorem{theorem}{Theorem}
\newtheorem{corollary}[theorem]{Corollary}
\newtheorem{definition}[theorem]{Definition}

\newtheorem{example}[theorem]{Example}
\newtheorem{remark}[theorem]{Remark}

\begin{document}

\title{A Cauchy--Riemann-Type CK Extension for Monogenic Functions}

\author{Dixan Pe\~na Pe\~na\\
\small{e-mail: dixanpena@gmail.com}}

\date{\small{Department of Mathematics: Analysis, Logic and Discrete Mathematics\\Ghent University, Krijgslaan 297\\9000 Gent, Belgium}}

\maketitle

\begin{abstract}
\noindent We introduce a generalized Cauchy--Kowalevski extension associated with a biaxial decomposition of $\mathbb{R}^{m+1}$, allowing monogenic functions to be reconstructed from data prescribed on the subspace $\{X\in\mathbb{R}^{m+1}: z=0\}$ where $z=x_0+x_1e_1$. The extension admits a natural decomposition into monogenic layers, leading to operators $\mathrm{CK}_{z,k}$. Classes of biaxial monogenic functions generated by these operators are characterized by systems of partial differential equations, and explicit power series solutions are obtained. The action of the operators $\mathrm{CK}_{z,k}$ on homogeneous polynomials is then investigated, leading to a class of biaxial monogenic plane waves and the associated system of partial differential equations.\vspace{0.1cm}\\
\textit{Mathematics Subject Classification}: 30G20, 30G35, 33C10.
\end{abstract}

\section{Introduction and preliminaries}

Let $\mathbb{R}_{0,m}$ ($m \in \mathbb{N}$) denote the real Clifford algebra of signature $(0,m)$ generated by the elements $e_1, \ldots, e_m$ (see e.g. \cite{Cl,Lou}). These generators can be identified with the standard Euclidean basis of $\mathbb{R}^m$. The multiplication in $\mathbb{R}_{0,m}$ is defined so that, for any vector $\underline{x} = \sum_{j=1}^m x_j e_j \in \mathbb{R}^m$, one has
\[
\underline{x}^2 = -|\underline{x}|^2 = -\sum_{j=1}^m x_j^2.
\]
This condition implies that the generators $e_j$ satisfy the relations
\begin{alignat*}{2}
e_j^2 &= -1, &\qquad & j = 1, \ldots, m, \\
e_j e_k + e_k e_j &= 0, &\qquad & 1 \leq j \neq k \leq m.
\end{alignat*}
These rules completely determine the algebraic structure of $\mathbb{R}_{0,m}$. A general element $a \in \mathbb{R}_{0,m}$ can be expressed as
\[
a = \sum_{A} a_A e_A, \qquad a_A \in \mathbb{R},
\]
where the basis elements $e_A = e_{j_1} \cdots e_{j_k}$ correspond to subsets $A = \{j_1, \ldots, j_k\} \subset \{1, \ldots, m\}$ with $j_1 < \cdots < j_k$, and where we adopt the convention $e_{\emptyset} = 1$. Therefore, $\mathbb{R}_{0,m}$ has dimension $2^m$.

The conjugation is defined as an anti-involution on $\mathbb{R}_{0,m}$ by
\[
\overline{a} = \sum_A a_A \, \overline{e_A},
\]
where the conjugation of basis elements is given by
\[
\overline{e_A} = \overline{e_{j_k}} \cdots \overline{e_{j_1}}, \qquad
\overline{e_j} = -e_j, \quad j = 1, \ldots, m.
\]
Observe that $\mathbb{R}^{m+1}$ can be naturally embedded into the real Clifford algebra $\mathbb{R}_{0,m}$ by identifying each point $(x_0,x_1,\ldots,x_m) \in \mathbb{R}^{m+1}$ with the paravector
\[
X = x_0 + \underline{x} = x_0 + \sum_{j=1}^m x_j e_j.
\]
Clifford analysis is a higher-dimensional generalization of complex function theory based on Clifford algebras and Dirac-type operators (see e.g. \cite{BDSom,Del, DSomS, GiMu, GuSp}). Its central object is the generalized Cauchy--Riemann operator
\[\partial_X = \partial_{x_0} + \partial_{\underline{x}},\]
where
\[
\partial_{\underline{x}} = \sum_{j=1}^m e_j \partial_{x_j}
\]
is the Dirac operator in $\mathbb{R}^m$.

\begin{definition}
Let $\Omega$ be an open subset of $\mathbb{R}^{m+1}$. A continuously differentiable function $f : \Omega \to \mathbb{R}_{0,m}$ is called left monogenic, or simply monogenic, if it satisfies
\[
\partial_X f(X) = 0 \quad \text{in } \Omega.
\]
\end{definition}

The operator $\partial_X$ provides a factorization of the Laplacian in $\mathbb{R}^{m+1}$, namely,
\[
\Delta_{m+1} = \sum_{j=0}^m \partial_{x_j}^2 
= \partial_X \, \overline{\partial_X} = \overline{\partial_X} \, \partial_X.
\]

For a differentiable $\mathbb{R}$-valued function $\phi$ and a differentiable $\mathbb{R}_{0,m}$-valued function $g$, we have
\begin{equation}\label{LeibnizRuleOne}
\partial_{\underline{x}}(\phi\,g) = (\partial_{\underline{x}} \phi) g + \phi (\partial_{\underline{x}} g).
\end{equation}

Moreover, for a differentiable vector-valued function $\underline{f} = \sum_{j=1}^m f_j e_j$, we have
\[
\partial_{\underline{x}}(\underline{f} g) = (\partial_{\underline{x}} \underline{f}) g - \underline{f} (\partial_{\underline{x}} g) - 2 \sum_{j=1}^m f_j (\partial_{x_j} g).
\]

Let
\[
\beta_{j,n} =
\begin{cases}
j, & \text{if $j$ is even}, \\
2n + m + j - 1, & \text{if $j$ is odd},
\end{cases}
\]
and let $P_n(\underline{x})$ denote a homogeneous monogenic polynomial in $\mathbb{R}^m$ of degree $n$. Using the above Leibniz rules, together with Euler's theorem for homogeneous functions, we obtain the following identity
\begin{equation}\label{LeibnizRuleTwo}
\partial_{\underline{x}}\big(\underline{x}^j P_n(\underline{x})\big)
=
- \beta_{j,n}\, \underline{x}^{j-1} P_n(\underline{x}), \quad j \ge 1.
\end{equation}

A fundamental tool in Clifford analysis is the Cauchy--Kowalevski (CK) extension, which provides a natural way to extend functions defined on $\mathbb{R}^m$ to monogenic functions in $\mathbb{R}^{m+1}$ (see e.g. \cite{DSomS,SomExp,SomAx}). Let $g$ be a real-analytic function defined on an open subset of $\mathbb{R}^m$. Then there exists a unique monogenic extension $f$ in $\mathbb{R}^{m+1}$ satisfying
\[
f(0,\underline{x}) = g(\underline{x}),
\]
denoted by $\mathrm{CK}[g(\underline{x})](X)$ and given explicitly by
\[
f(X) = \sum_{k=0}^{\infty} \frac{(-x_0)^k}{k!}\, \partial_{\underline{x}}^k g(\underline{x}).
\]
This series admits a compact representation in terms of functional calculus, namely
\[
f(X) = \exp(-x_0 \partial_{\underline{x}})\, g(\underline{x}).
\]
The CK extension plays a central role in the construction of important families of monogenic functions. Two notable examples are axial monogenic functions and monogenic plane waves.

An axial monogenic function of degree $n$ has the form
\[
\left(A(x_0,r)+\frac{\underline x}{r}\,B(x_0,r)\right)P_n(\underline x),\qquad r=|\underline{x}|,
\]
where $P_n(\underline{x})$ is as above, and the $\mathbb{R}$-valued functions $A$ and $B$ satisfy the Vekua-type system
\begin{align*}
\partial_{x_0}A-\partial_rB&=\displaystyle{\frac{2n+m-1}{r}}\,B\\
\partial_{x_0}B+\partial_rA&=0.
\end{align*}
Monogenic plane waves are given by 
\[M(u,v)-\frac{\underline{t}}{|\underline{t}|}N(u,v), \qquad u=\langle \underline{x},\underline{t}\rangle, \qquad v=x_0|\underline{t}|, \]
where $\underline{t}\in\mathbb{R}^m$ is fixed, and the $\mathbb{R}$-valued functions $M$ and $N$ satisfy the Cauchy--Riemann equations in the variables $u$ and $v$.

These examples illustrate how the CK extension naturally gives rise to special classes of monogenic functions characterized by lower-dimensional systems of partial differential equations. Motivated by this observation, as well as by a power series expansion introduced in \cite{DPFS}, we develop in this paper a CK-type extension adapted to the decomposition
\[\mathbb{R}^{m+1}=\mathbb{R}^2\oplus\mathbb{R}^{m-1}, \]
where the two-dimensional component is described by the variable $z=x_0+x_1e_1$. As in the classical setting, this construction gives rise to families of monogenic functions and associated partial differential systems.

The paper is organized as follows. 

In Section~2, we introduce a generalized CK extension in which monogenic functions are reconstructed from data prescribed on the codimension-2 subspace $\{X\in\mathbb{R}^{m+1}: z=0\}$. We show that the extension naturally decomposes into monogenic layers and gives rise to operators $\mathrm{CK}_{z,k}$ admitting a representation in terms of modified Bessel functions of the first kind.

In Section~3, we investigate classes of biaxial monogenic functions generated by the operators $\mathrm{CK}_{z,k}$. We derive the corresponding systems of partial differential equations, determine their power series solutions, and obtain explicit hypergeometric representations for solutions arising from power initial conditions.

Finally, in Section~4, we investigate the action of the operators $\mathrm{CK}_{z,k}$ on homogeneous polynomials, which leads naturally to the introduction of a class of biaxial monogenic plane waves and the derivation of the associated partial differential system.

\section{Biaxial decomposition and a generalized CK extension}

We consider the biaxial decomposition $\mathbb{R}^{m+1} = \mathbb{R}^2 \oplus \mathbb{R}^{m-1}$, which allows us to represent any vector $X \in \mathbb{R}^{m+1}$ in the form
\[
X = z + \underline{y},
\]
where
\[
z = x_0 + x_1 e_1, \qquad \underline{y} = \sum_{j=2}^m x_j e_j.
\]
With respect to this decomposition, the generalized Cauchy--Riemann operator $\partial_X$ splits as
\[
\partial_X = 2 \partial_{\bar z} + \partial_{\underline{y}},
\]
where 
\[
\partial_{\bar z} = \tfrac{1}{2}(\partial_{x_0} + e_1 \partial_{x_1})
\]
is the analogue of the classical Cauchy--Riemann operator in the $(x_0,x_1)$-plane, and 
\[
\partial_{\underline{y}} = \sum_{j=2}^m e_j \partial_{x_j}.
\]
We also introduce the operator
\[
\partial_z = \tfrac{1}{2}(\partial_{x_0} - e_1 \partial_{x_1}).
\]

Motivated by this decomposition, we consider functions admitting a double series expansion in the variables $z$ and $\bar{z}$ of the form
\begin{equation}\label{eq:biaxial_expansion}
F(X) = \sum_{k=0}^{\infty} \sum_{\ell=0}^{\infty} z^k \bar{z}^{\ell} \, A_{k,\ell}(\underline{y}),
\end{equation}
where the coefficient functions $A_{k,\ell} : \mathbb{R}^{m-1} \to \mathbb{R}_{0,m}$ depend only on the variables $x_2,\ldots,x_m$. Throughout this paper, such series are understood in a formal sense and questions of convergence are not considered.

The representation \eqref{eq:biaxial_expansion} is natural, since any function admitting a power series expansion in the variables $x_0$ and $x_1$ can be equivalently rewritten in terms of $z$ and $\bar{z}$, which form an alternative coordinate system for the two-dimensional component.

We now investigate under which conditions the function $F$ is monogenic, that is,
\[
\partial_X F(X) = 0.
\]
This leads to the following characterization.

\begin{theorem}\label{thm:biaxial_CK}
Let $F$ be given by \eqref{eq:biaxial_expansion}. Then $F$ is monogenic if and only if the coefficients $A_{k,\ell}$ satisfy
\begin{equation}\label{eq:CKcoefficients}
A_{k,\ell}(\underline{y}) =
\begin{cases}
\displaystyle
\frac{(k-\ell)!}{2^{2\ell} \, k! \, \ell!}
\partial_{\underline{y}}^{2\ell} A_{k-\ell,0}(\underline{y}),
& \text{if } k \ge \ell, \\[1.2em]

\displaystyle
-\frac{(\ell-k-1)!}{2^{2k+1} \, k! \, \ell!}
\partial_{\underline{y}}^{2k+1} A_{\ell-k-1,0}(\underline{y}),
& \text{if } k < \ell.
\end{cases}
\end{equation}
The coefficients $A_{k,\ell}$ are uniquely determined by the initial data $A_{k,0}$. These are obtained from the restriction of $F$ at $z=0$ together with its $z$-derivatives, more precisely,
\begin{equation}\label{eq:Ak0}
A_{k,0}(\underline{y}) = \frac{1}{k!} \partial_z^k F(X) \big|_{z=0}.
\end{equation}
\end{theorem}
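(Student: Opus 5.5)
The plan is to compute $\partial_X F$ term by term using the splitting $\partial_X = 2\partial_{\bar z}+\partial_{\underline y}$. This yields a two-index recurrence for the $A_{k,\ell}$, which I then solve by induction. The formula \eqref{eq:Ak0} follows by differentiating in $z$.

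\emph{Action on monomials.} Since $z$ and $\bar z$ lie in the commutative subalgebra spanned by $1$ and $e_1$, they commute. A direct check gives $\partial_{\bar z} z = \tfrac12(1+e_1e_1)=0$ and $\partial_{\bar z}\bar z = \tfrac12(1-e_1e_1)=1$, and similarly $\partial_z z=1$, $\partial_z\bar z=0$. Left multiplication by $e_1$ commutes with $z,\bar z$, so
\[
2\partial_{\bar z}\big(z^k\bar z^\ell A_{k,\ell}(\underline y)\big)=2\ell\, z^k\bar z^{\ell-1}A_{k,\ell}(\underline y).
\]
For $j\ge 2$ we have $e_je_1=-e_1e_j$, hence $e_j z=\bar z e_j$ and $e_j\bar z=z e_j$. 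Therefore
\[
\partial_{\underline y}\big(z^k\bar z^\ell A_{k,\ell}\big)=\bar z^k z^\ell\,\partial_{\underline y}A_{k,\ell}.
\]
This swap of $z$ and $\bar z$ is the key structural feature. Collecting the coefficient of $z^k\bar z^\ell$ (after the index shifts $\ell\mapsto\ell+1$ in the first sum and $(k,\ell)\mapsto(\ell,k)$ in the second), I get
\[
\partial_X F=\sum_{k,\ell\ge0} z^k\bar z^\ell\Big(2(\ell+1)A_{k,\ell+1}+\partial_{\underline y}A_{\ell,k}\Big).
\]

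\emph{From vanishing to the recurrence.} To conclude that $\partial_XF=0$ forces each bracket to vanish, I need uniqueness of the coefficients in \eqref{eq:biaxial_expansion}. I will get it by applying $\partial_z^k\partial_{\bar z}^\ell$ and evaluating at $z=0$. Using the derivative rules above, and the fact that these operators commute with $z,\bar z$ and do not touch $\underline y$, this recovers $k!\,\ell!\,A_{k,\ell}$. The special case $\ell=0$ is precisely \eqref{eq:Ak0}. So monogenicity is equivalent to
\[
A_{k,\ell+1}=-\frac{1}{2(\ell+1)}\,\partial_{\underline y}A_{\ell,k},\qquad k,\ell\ge 0.
\]
The converse direction is immediate from the same computation. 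Conversely, every $A_{k,\ell}$ with $\ell\ge1$ is determined by entries with strictly smaller second index, after swapping. This gives uniqueness from the data $A_{k,0}$.

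\emph{Solving the recurrence.} This is the main bookkeeping step. I will prove \eqref{eq:CKcoefficients} by induction on $\ell$, the case $\ell=0$ being trivial.
\begin{itemize}
\item Case $k\ge \ell+1$ for $A_{k,\ell+1}$: here $A_{\ell,k}$ has first index smaller than the second. The second branch gives $A_{\ell,k}=-\frac{(k-\ell-1)!}{2^{2\ell+1}\ell!\,k!}\partial_{\underline y}^{2\ell+1}A_{k-\ell-1,0}$. Multiplying by $-\frac{1}{2(\ell+1)}\partial_{\underline y}$ yields the first branch with $\ell+1$.
\item Case $k<\ell+1$, i.e. $k\le \ell$: here $A_{\ell,k}$ falls in the first branch, $A_{\ell,k}=\frac{(\ell-k)!}{2^{2k}\ell!\,k!}\partial_{\underline y}^{2k}A_{\ell-k,0}$. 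Applying $-\frac{1}{2(\ell+1)}\partial_{\underline y}$ produces $-\frac{(\ell-k)!}{2^{2k+1}k!\,(\ell+1)!}\partial_{\underline y}^{2k+1}A_{\ell-k,0}$, which is the second branch with $\ell+1$ in place of $\ell$.
\end{itemize}
Note that the induction uses $A_{\ell,k}$, whose second index $k$ may exceed $\ell$. So I would actually run the induction on $k+\ell$, or on $\min$ plus a separate treatment. The cleanest choice is strong induction on the second index together with the observation that in both cases the needed entry has already been expressed via the $A_{\cdot,0}$. I expect this index juggling, and matching the factorials and powers of $2$, to be the only delicate point.
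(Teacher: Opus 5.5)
Your proposal is correct and follows the paper's proof step for step: the splitting $\partial_X=2\partial_{\bar z}+\partial_{\underline y}$, the swap $e_j z=\bar z e_j$, the reindexed coefficient $2(\ell+1)A_{k,\ell+1}+\partial_{\underline y}A_{\ell,k}$ and the resulting recurrence; you also supply the coefficient-uniqueness argument and the explicit solution of the recurrence, which the paper only asserts. Of the induction schemes you mention, strong induction on $k+\ell$ is the one that works as written, because $A_{k,\ell+1}$ is expressed through $A_{\ell,k}$, whose total index is one lower; induction on the second index alone (and likewise your remark about ``strictly smaller second index'') breaks in the case $k\ge\ell+1$ unless you apply the recurrence a second time to reach $A_{k-1,\ell}$.
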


\begin{proof}
Using the decomposition $\partial_X = 2\partial_{\bar z} + \partial_{\underline{y}}$, we obtain
\begin{align*}
\partial_{\bar z}\left(z^k \bar{z}^{\ell} \, A_{k,\ell}(\underline{y})\right)
&= \ell \, z^k \bar{z}^{\ell-1} \, A_{k,\ell}(\underline{y}),\\
\partial_{\underline{y}}\left(z^k \bar{z}^{\ell} \, A_{k,\ell}(\underline{y})\right)
&= \bar{z}^k z^{\ell} \, \partial_{\underline{y}}A_{k,\ell}(\underline{y}).
\end{align*}
It follows that
\[
\partial_X F(X) =
2\sum_{k=0}^{\infty} \sum_{\ell=1}^{\infty}\ell \, z^k \bar{z}^{\ell-1} \, A_{k,\ell}(\underline{y})
+
\sum_{k=0}^{\infty} \sum_{\ell=0}^{\infty}\bar{z}^k z^{\ell} \, \partial_{\underline{y}}A_{k,\ell}(\underline{y}).
\]
Hence,
\[
\partial_X F(X) =
\sum_{k=0}^{\infty} \sum_{\ell=0}^{\infty}
z^k \bar{z}^{\ell}
\left(
2(\ell+1)\, A_{k,\ell+1}(\underline{y})
+ \partial_{\underline{y}}A_{\ell,k}(\underline{y})
\right).
\]
Hence, the monogenicity condition $\partial_X F(X)=0$ is equivalent to
\[
2(\ell+1)\, A_{k,\ell+1}(\underline{y})
+ \partial_{\underline{y}}A_{\ell,k}(\underline{y}) = 0,
\qquad k,\ell \ge 0.
\]
Equivalently, this yields the recurrence relation
\[
A_{k,\ell}(\underline{y})
=
-\frac{1}{2\ell} \, \partial_{\underline{y}} A_{\ell-1,k}(\underline{y}),
\qquad k \ge 0,\ \ell \ge 1.
\]
This recurrence relation can be solved explicitly, yielding the formula \eqref{eq:CKcoefficients} for the coefficients $A_{k,\ell}$ and showing that they are uniquely determined by the initial data $A_{k,0}$. Equation \eqref{eq:Ak0} follows directly from the expansion \eqref{eq:biaxial_expansion}. This completes the proof.
\end{proof}

\begin{example}
Consider the initial conditions 
\[ A_{0,0}(\underline{y}) = x_j,\quad A_{1,0}(\underline{y}) = 1,\quad A_{k,0}(\underline{y}) = 0,\quad k \ge 2,\]
where $j\ge 2$. Applying \eqref{eq:CKcoefficients}, we obtain the monogenic function
\[F(X)=x_j + z - \frac{1}{2}\,\bar{z}\,e_j, \quad j \ge 2.\]
\end{example}

\begin{example} 
Let
\[ A_{0,0}(\underline{y}) = x_j^2,\quad A_{1,0}(\underline{y}) = x_j,\quad A_{2,0}(\underline{y}) = 1,\quad A_{k,0}(\underline{y}) = 0,\quad k \ge 3,\]
where $j\ge 2$. Using \eqref{eq:CKcoefficients}, we obtain
\[ F(X) = x_j^2 + z\,x_j - \bar z\,e_j x_j + z^2 -\frac{1}{2}\,z\bar z -\frac{1}{4}\,\bar z^{\,2}e_j. \]
\end{example}

We now fix $k \ge 0$ and determine all coefficients generated by the initial value $A_{k,0}$. The corresponding part of the series \eqref{eq:biaxial_expansion} is given by
\[
F_k(X) =
\sum_{s=0}^{\infty} z^{k+s}\bar{z}^{s} \, A_{k+s,s}(\underline{y})
+
\sum_{s=0}^{\infty} z^{s}\bar{z}^{k+s+1} \, A_{s,k+s+1}(\underline{y}).
\]

We refer to $F_k$ as the \emph{$k$-th monogenic layer} associated with the initial data $A_{k,0}$. It may be viewed as the analogue of the term $\frac{f^{(k)}(0)}{k!} z^k$ in the Taylor expansion of a holomorphic function $f(z)$.

This yields monogenic functions of the form
\[
F_k(X) =
z^{k}\sum_{s=0}^{\infty} |z|^{2s} \, B_{s}(\underline{y})
+
\bar{z}^{k+1}\sum_{s=0}^{\infty} |z|^{2s} \, C_{s}(\underline{y}).
\]

We may also determine when $F_k$ is monogenic directly in terms of the functions $B_s$ and $C_s$. Substituting this series into the monogenicity condition $\partial_X F_k = 0$ and comparing like coefficients, we obtain
\[
B_{s}(\underline{y})=-\frac{1}{2s}\,\partial_{\underline{y}}C_{s-1}(\underline{y}),\quad s\ge1,
\]
\[
C_{s-1}(\underline{y})=-\frac{1}{2(k+s)}\,\partial_{\underline{y}}B_{s-1}(\underline{y}),\quad s\ge1.
\]

Solving these recurrence relations, or equivalently applying Theorem~\ref{thm:biaxial_CK} we obtain
\[
B_{s}(\underline{y})=\frac{k!}{2^{2s}s!(k+s)!}\,\partial_{\underline{y}}^{2s}B_{0}(\underline{y}),\quad s\ge1,
\]
\[
C_{s}(\underline{y})=-\frac{k!}{2^{2s+1}s!(k+s+1)!}\,\partial_{\underline{y}}^{2s+1}B_{0}(\underline{y}),\quad s\ge0.
\]
As expected, the initial data for the $k$-th monogenic layer is given by a single function, namely $B_0$ (corresponding to $A_{k,0}$).

Substituting these expressions into $F_k$, we obtain
\[
F_k(X) =
k!\left(
z^{k}\sum_{s=0}^{\infty} \frac{|z|^{2s}}{2^{2s}s!(k+s)!}\,\partial_{\underline{y}}^{2s}B_{0}(\underline{y})
-
\bar{z}^{k+1}\sum_{s=0}^{\infty} \frac{|z|^{2s}}{2^{2s+1}s!(k+s+1)!}\,\partial_{\underline{y}}^{2s+1}B_{0}(\underline{y})
\right).
\]

Recalling that $B_0(\underline{y}) = A_{k,0}(\underline{y})$ and using the identity \eqref{eq:Ak0}, we rewrite $F_k$ as
\begin{equation*}
F_k(X) = \mathrm{CK}_{z,k}\left[g_k(\underline{y})\right](X),
\end{equation*}
where the operator $\mathrm{CK}_{z,k}$ is given by
\begin{equation}\label{eq:CKzk}
\begin{aligned}
\mathrm{CK}_{z,k}\left[g_k(\underline{y})\right](X) &=
z^{k}\sum_{s=0}^{\infty} \frac{|z|^{2s}}{2^{2s}s!(k+s)!}\,\partial_{\underline{y}}^{2s}g_k(\underline{y})\\
&\qquad -\bar{z}^{k+1}\sum_{s=0}^{\infty} \frac{|z|^{2s}}{2^{2s+1}s!(k+s+1)!}\,\partial_{\underline{y}}^{2s+1}g_k(\underline{y}).
\end{aligned}
\end{equation}
and
\[
g_k(\underline{y}) = \partial_z^k F(X)\big|_{z=0}.
\]
The explicit expression for $F_k$ suggests a connection with special functions. Indeed, $F_k$ can be rewritten in terms of the modified Bessel function of the first kind, defined by
\[
I_k(x)=\sum_{s=0}^{\infty}\frac{1}{s!(k+s)!}\left(\frac{x}{2}\right)^{2s+k}.
\]
This observation allows us to express $F_k$ in a more compact form using functional calculus. 
\begin{equation}\label{eq:CKzkBessel}
\begin{aligned}
\mathrm{CK}_{z,k}\left[g_k(\underline{y})\right](X) &=
z^{k}\left(\frac{2}{|z|\partial_{\underline{y}}}\right)^k
I_k\bigl(|z|\partial_{\underline{y}}\bigr) g_k(\underline{y})\\
&\qquad -\frac{\bar{z}^{k+1}}{|z|}
\left(\frac{2}{|z|\partial_{\underline{y}}}\right)^k
I_{k+1}\bigl(|z|\partial_{\underline{y}}\bigr) g_k(\underline{y}).
\end{aligned}
\end{equation}

\begin{corollary}
Let $\{g_k(\underline{y})\}_{k \ge 0}$ be a sequence of real-analytic functions. Then there exists a unique monogenic function $F$ satisfying
\[
\partial_z^k F(X)\big|_{z=0} = g_k(\underline{y}), \qquad k \ge 0,
\]
given by
\[
F(X) = \sum_{k=0}^{\infty} \mathrm{CK}_{z,k}\left[g_k(\underline{y})\right](X),
\]
where $\mathrm{CK}_{z,k}[g_k(\underline{y})]$ is given by \eqref{eq:CKzk}, and can equivalently be written in the Bessel-type form \eqref{eq:CKzkBessel}.
\end{corollary}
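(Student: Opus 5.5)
The plan is to derive the corollary directly from Theorem~\ref{thm:biaxial_CK}, in three steps: an existence part, a uniqueness part, and a check of the identification with the layer operators $\mathrm{CK}_{z,k}$. Throughout, series are treated formally, as the paper stipulates.

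\textbf{Existence.} Given the sequence $\{g_k\}$, set $A_{k,0}(\underline{y})=g_k(\underline{y})/k!$. Define all remaining coefficients $A_{k,\ell}$ by formula \eqref{eq:CKcoefficients}. By the ``if'' direction of Theorem~\ref{thm:biaxial_CK}, the series \eqref{eq:biaxial_expansion} with these coefficients is monogenic. Real-analyticity of the $g_k$ is used only so that the derivatives $\partial_{\underline{y}}^{j}g_k$ make sense; it is also what makes the construction meaningful beyond the formal level.

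\textbf{Verifying the data.} Apply $\partial_z^k$ and restrict to $z=0$. Since $\partial_z z=1$, $\partial_z\bar z=0$ and $\partial_z$ commutes with functions of $\underline{y}$, we get $\partial_z^k(z^j\bar z^\ell)=\frac{j!}{(j-k)!}z^{j-k}\bar z^\ell$ for $j\ge k$, and zero otherwise. At $z=0$ the only surviving term is $j=k$, $\ell=0$. This gives $\partial_z^kF|_{z=0}=k!\,A_{k,0}=g_k$, which is \eqref{eq:Ak0}. One small point needs attention: $z$ and $\bar z$ lie in the commutative subalgebra generated by $1$ and $e_1$, and in $z^k\bar z^\ell A_{k,\ell}$ they sit to the left of the coefficient. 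So $\partial_z$, which acts with $e_1$ on the left, behaves exactly as in the complex case.

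\textbf{Uniqueness.} Suppose $G$ is another monogenic function of the form \eqref{eq:biaxial_expansion} with the same data. Then the difference $F-G$ is monogenic and has $A_{k,0}=0$ for all $k$. By the ``only if'' direction of Theorem~\ref{thm:biaxial_CK}, formula \eqref{eq:CKcoefficients} then forces every coefficient of $F-G$ to vanish. The required uniqueness of the coefficients in the expansion is taken as part of the formal framework, since $z^k\bar z^\ell$ are independent monomials in $x_0,x_1$.

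\textbf{Identification with $\sum_k\mathrm{CK}_{z,k}[g_k]$.} Regroup the double series into the layers $F_k$. The index set $\{(k,\ell)\}$ is partitioned into:
\begin{itemize}
\item the diagonals $\{(k+s,s):s\ge0\}$, governed by $A_{k,0}$ through the first case of \eqref{eq:CKcoefficients};
\item the sets $\{(s,k+s+1):s\ge0\}$, governed by $A_{k,0}$ through the second case.
\end{itemize}
Each pair $(k,\ell)$ lies in exactly one set: the first kind if $k\ge\ell$, the second if $k<\ell$, with the layer index equal to $k-\ell$ or $\ell-k-1$ respectively. Hence $F=\sum_kF_k$.

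The computation preceding the corollary shows $F_k=\mathrm{CK}_{z,k}[g_k]$, given by \eqref{eq:CKzk}. Concretely, substitute $B_0=g_k/k!$ and use $z^{k+s}\bar z^s=z^k|z|^{2s}$ and $z^s\bar z^{k+s+1}=\bar z^{k+1}|z|^{2s}$. The factor $k!$ cancels, producing \eqref{eq:CKzk}.

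\textbf{Bessel form.} The equivalence with \eqref{eq:CKzkBessel} follows by expanding $I_k$ and $I_{k+1}$ as series in the operator variable $|z|\partial_{\underline{y}}$. The prefactors $(2/(|z|\partial_{\underline{y}}))^k$ and $1/|z|$ cancel the leading powers, so no negative powers of $\partial_{\underline{y}}$ remain.

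\textbf{Main obstacle.} The step needing the most care is the bookkeeping in the regrouping: checking that the two index families partition all $(k,\ell)$, and that the normalization $g_k=k!A_{k,0}$ matches the factorials in \eqref{eq:CKzk}. Everything else is a direct application of Theorem~\ref{thm:biaxial_CK}.
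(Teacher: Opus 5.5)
Your proposal is correct and follows essentially the same route as the paper. The corollary is read off from Theorem~\ref{thm:biaxial_CK}, with existence and uniqueness of all coefficients determined by the data $A_{k,0}=g_k/k!$ via \eqref{eq:Ak0}, together with the regrouping of the double series into the layers $F_k=\mathrm{CK}_{z,k}[g_k]$ and the series expansion of $I_k$ and $I_{k+1}$; your explicit checks of the index partition and of the normalization $g_k=k!\,A_{k,0}$ are exactly the bookkeeping the paper leaves implicit.
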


This construction extends the classical CK extension from the hyperplane $\{X\in\mathbb{R}^{m+1}: x_0=0\}$ to the codimension-2 subspace $\{X\in\mathbb{R}^{m+1}: z=0\}$. While the classical setting prescribes a single initial function on the $m$-dimensional hyperplane, the present framework considers data on the lower-dimensional subspace of dimension $m-1$. Consequently, the initial data are no longer given by a single function, but by a sequence of functions $\{g_k\}_{k \ge 0}$ compensating for the reduction in dimension.

\begin{remark}
The operator $\mathrm{CK}_{z,k}$ generates monogenic functions of the form
\[z^{k} B(|z|, \underline{y}) + \bar{z}^{k+1}C(|z|, \underline{y}).\]
One particular yet interesting case arises when $B$ and $C$ are independent of $|z|$. In this case, monogenicity is equivalent to 
\[\partial_{\underline{y}}C(\underline{y})=0, \qquad C(\underline{y}) = -\frac{1}{2(k+1)} \partial_{\underline{y}}B(\underline{y}). \]
Substituting the second equation into the first gives
\[\partial_{\underline{y}}^{2}B(\underline{y}) = -\Delta_{\underline{y}}B(\underline{y}) = 0. \]
Hence, $B(\underline{y})$ is harmonic, and the corresponding monogenic function takes the form
\[z^{k}B(\underline{y}) - \frac{\bar{z}^{k+1}}{2(k+1)}\,\partial_{\underline{y}}B(\underline{y}). \]
These functions are precisely the power steering monogenic functions introduced in \cite{DPFSSt}. 
\end{remark}

In the remainder of this paper, we take a closer look at two families of monogenic functions generated by $\mathrm{CK}_{z,k}$.

\section{Biaxial monogenic functions}

The action of the operators $\mathrm{CK}_{z,k}$ on the functions $g_k(\underline{y})$ can be described using the fact that each $g_k(\underline{y})$ is real analytic in $\mathbb{R}^{m-1}$ and thus admits a convergent expansion of the form
\[
g_k(\underline{y}) = \sum_{d=0}^{\infty} H_d(\underline{y}),
\]
where each $H_d(\underline{y})$ is a homogeneous polynomial of degree $d$.

Using the Fischer decomposition in Clifford analysis (see \cite{DSomS}), each homogeneous component $H_d(\underline{y})$ can be written as a finite sum of terms of the form
\[
\underline{y}^j M_{d-j}(\underline{y}), \qquad j=0,\dots,d,
\]
where $M_{d-j}(\underline{y})$ are homogeneous monogenic polynomials in $\mathbb{R}^{m-1}$ of degree $d-j$.

Since $\mathrm{CK}_{z,k}$ is additive, it suffices to determine how the operators act on polynomials of the form $\underline{y}^j M_{n}(\underline{y})$.

Depending on the parity of the exponent $j$, and as a consequence of identity \eqref{LeibnizRuleTwo}, one obtains monogenic functions of the form
\begin{equation}\label{eq:biaxial_even}
\left(z^k A(u,v) + \bar{z}^{k+1}\underline{y}\, B(u,v)\right) M_n(\underline{y}),\qquad \text{if $j$ is even},
\end{equation}
or
\begin{equation}\label{eq:biaxial_odd}
\left(z^k \underline{y}\,C(u,v) + \bar{z}^{k+1} D(u,v)\right) M_n(\underline{y}),\qquad \text{if $j$ is odd},
\end{equation}
where
\[
u = \frac{|z|^2}{2}, \qquad v = \frac{|\underline{y}|^2}{2},
\]
and $A, B, C$ and $D$ are $\mathbb{R}$-valued continuously differentiable functions. 

\begin{theorem}
Let $M_n(\underline{y})$ be a homogeneous monogenic polynomial in $\mathbb{R}^{m-1}$ of degree $n$. Then the function \eqref{eq:biaxial_even} is monogenic if and only if $A$ and $B$ satisfy
\begin{equation}\label{eq:system-AB}
\begin{aligned}
\partial_u A - 2v\,\partial_v B &= (2n + m - 1)B, \\[4pt]
\partial_v A + 2u\, \partial_u B &= -2(k+1)B,
\end{aligned}
\end{equation}
and the function \eqref{eq:biaxial_odd} is monogenic if and only if $C$ and $D$ satisfy
\begin{equation}\label{eq:system-CD}
\begin{aligned}
\partial_u C + \partial_v D &= 0, \\[4pt]
v\, \partial_v C - u\, \partial_u D &= -\dfrac{2n + m - 1}{2}\, C + (k+1)D.
\end{aligned}
\end{equation}
\end{theorem}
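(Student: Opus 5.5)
The plan is to compute $\partial_X$ of \eqref{eq:biaxial_even} and \eqref{eq:biaxial_odd} directly, using the splitting $\partial_X = 2\partial_{\bar z} + \partial_{\underline{y}}$ from Section~2. I would then show that the result splits into two independent pieces whose vanishing is exactly \eqref{eq:system-AB}, respectively \eqref{eq:system-CD}.

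\textbf{Elementary rules.} I would first record the following facts.
\begin{itemize}
\item $z$ and $\bar z$ commute with $e_1$ and with real-valued functions.
\item For $j\ge 2$ we have $e_j z = \bar z e_j$. Hence $\partial_{\underline{y}}(z^k G) = \bar z^k \partial_{\underline{y}} G$ and $\partial_{\underline{y}}(\bar z^{k+1} G) = z^{k+1}\partial_{\underline{y}} G$.
\item $\partial_{\bar z} z^k = 0$, $\partial_{\bar z}\bar z^{k+1} = (k+1)\bar z^k$, $2\partial_{\bar z} u = z$, $\partial_{\bar z} v=0$, $\partial_{\underline{y}} u = 0$ and $\partial_{\underline{y}} v = \underline{y}$.
\end{itemize}
Since $M_n$ is monogenic in $\mathbb{R}^{m-1}$, the Leibniz rule \eqref{LeibnizRuleOne} gives $\partial_{\underline{y}}(\phi(v)M_n)=\phi'(v)\,\underline{y}M_n$. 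Identity \eqref{LeibnizRuleTwo} with $j=1$, applied in dimension $m-1$, gives $\partial_{\underline{y}}(\underline{y}M_n) = -(2n+m-1)M_n$, since $\beta_{1,n}$ becomes $2n+(m-1)+1-1$. Combining these with $\underline{y}^2=-2v$, I get $\partial_{\underline{y}}(\underline{y}\,\phi\, M_n) = -(2v\,\partial_v\phi + (2n+m-1)\phi)M_n$.

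\textbf{Even case.} Applying these rules term by term, with $2\bar z^{k+1} z\,\partial_u B/2 = \bar z^k\, 2u\,\partial_u B$, I expect
\[
\partial_X\big[(z^kA+\bar z^{k+1}\underline{y}B)M_n\big]
= z^{k+1}\big(\partial_uA - 2v\partial_vB-(2n+m-1)B\big)M_n
+ \bar z^{k}\,\underline{y}\big(\partial_vA + 2u\partial_uB + 2(k+1)B\big)M_n .
\]
\textbf{Odd case.} The same computation should give
\[
z^{k+1}\underline{y}\big(\partial_uC+\partial_vD\big)M_n + \bar z^k\big(2(k+1)D - 2v\partial_vC-(2n+m-1)C + 2u\partial_u D\big)M_n .
\]
After dividing the second bracket by $-2$, this yields \eqref{eq:system-CD}. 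Sufficiency in both cases is then immediate.

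\textbf{Main obstacle: necessity.} The key step is to show that each bracket must vanish separately. I would use the reflection $\underline{y}\mapsto-\underline{y}$. Under it $u,v$ and $z$ are unchanged, while $M_n(-\underline{y})=(-1)^nM_n(\underline{y})$. So the first summand has parity $(-1)^n$ and the second has parity $(-1)^{n+1}$. A sum of an even and an odd function vanishes only if both summands vanish.

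Next, for $z\neq0$ the paravector $z$ is invertible, and $\underline{y}$ is invertible for $\underline{y}\neq0$. Cancelling these factors leaves (real scalar)$\cdot M_n=0$. Hence the scalar bracket vanishes wherever $M_n\neq0$, which is an open dense set when $M_n\not\equiv0$. By continuity the bracket vanishes identically as a function of $(u,v)$. This gives the systems \eqref{eq:system-AB} and \eqref{eq:system-CD}.
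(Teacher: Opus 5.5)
Your proposal is correct and follows the paper's proof. You split $\partial_X=2\partial_{\bar z}+\partial_{\underline{y}}$, use \eqref{LeibnizRuleOne}, \eqref{LeibnizRuleTwo} (in dimension $m-1$) and $e_j z=\bar z e_j$, and arrive at the same two-bracket expressions for $\partial_X$ of \eqref{eq:biaxial_even} and \eqref{eq:biaxial_odd}. Your parity argument under $\underline{y}\mapsto-\underline{y}$, combined with the invertibility of $z$ and $\underline{y}$ and the assumption $M_n\not\equiv 0$, makes rigorous the separation of the two brackets that the paper leaves implicit in ``the result follows''.
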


\begin{proof}
We denote by $G(X)$ and $H(X)$ the functions defined in \eqref{eq:biaxial_even} and \eqref{eq:biaxial_odd}, respectively. Using identities \eqref{LeibnizRuleOne} and \eqref{LeibnizRuleTwo}, we obtain

\begin{align*}
2\partial_{\bar z}G &= \Bigl(z^{k+1}\partial_uA+2(k+1)\bar{z}^{k}\,\underline{y}\,B + \bar{z}^{k+1}z\,\underline{y}\,\partial_uB \Bigr) M_n\\
\partial_{\underline{y}}G &= \Bigl(\bar{z}^{k}\,\underline{y}\,\partial_vA+z^{k+1}\underline{y}^2\, \partial_vB-(2n + m - 1)z^{k+1}B  \Bigr) M_n
\end{align*}
and
\begin{align*}
2\partial_{\bar z}H &= \Bigl(z^{k+1}\underline{y}\,\partial_uC+2(k+1)\bar{z}^{k}D + \bar{z}^{k+1}z\,\partial_uD \Bigr) M_n\\
\partial_{\underline{y}}H &= \Bigl(\bar{z}^{k}\underline{y}^2\, \partial_vC-(2n + m - 1)\bar{z}^{k}C + z^{k+1}\,\underline{y}\,\partial_vD\Bigr) M_n.
\end{align*}

Therefore,
\[
\begin{aligned}
\partial_X G = {} & \left(
z^{k+1}\bigl(\partial_u A - 2v\, \partial_v B - (2n + m - 1)B\bigr)
\right. \\
& \left. \qquad + \bar{z}^{k}\,\underline{y}\,
\bigl(\partial_v A + 2u\, \partial_u B + 2(k+1)B\bigr)
\right) M_n
\end{aligned}
\]
and
\[
\begin{aligned}
\partial_X H = {} & \left(
z^{k+1}\,\underline{y}\,\bigl(\partial_u C + \partial_vD\bigr)
\right. \\
& \left. \qquad + \bar{z}^{k}
\bigl(2u\,\partial_uD- 2v\,\partial_v C -(2n + m - 1)C + 2(k+1)D\bigr)
\right) M_n.
\end{aligned}
\]
Hence, the result follows from the preceding two identities.
\end{proof}

We solve the systems \eqref{eq:system-AB} and \eqref{eq:system-CD} by means of a power series method. To this end, we seek solutions of the form 
\[ A(u,v)=\sum_{s=0}^{\infty} u^s a_s(v),\qquad B(u,v)=\sum_{s=0}^{\infty} u^s b_s(v), \]
and
\[ C(u,v)=\sum_{s=0}^{\infty} u^s c_s(v),\qquad D(u,v)=\sum_{s=0}^{\infty} u^s d_s(v), \]
where $a_s$, $b_s$, $c_s$ and $d_s$ are functions of $v$.

Substituting the series expansions for $A$ and $B$ into \eqref{eq:system-AB} and identifying the coefficients of equal powers of $u$, we obtain, 
\begin{align*}
(s+1)a_{s+1}(v)-2v\,b_s'(v) &=(2n + m - 1)b_s(v), \\
a_s'(v)+2s\,b_s(v) &=-2(k+1)b_s(v), 
\end{align*}
for $s\ge0$. The second relation can be rewritten as
\begin{equation}\label{eq:bseries}
b_s(v) = -\frac{a_s'(v)}{2(k+s+1)}. 
\end{equation}
Substituting this expression into the first relation yields 
\begin{equation}\label{eq:aseries}
a_{s+1}(v) = -\frac{1}{(s+1)(k+s+1)} \left( v\,a_s''(v) +\frac{2n+m-1}{2}\,a_s'(v) \right). 
\end{equation}

Given the initial function $a_0(v)$, relation \eqref{eq:aseries} uniquely determines the sequence of functions $\{a_s(v)\}_{s\ge0}$, while the sequence $\{b_s(v)\}_{s\ge0}$ is obtained from \eqref{eq:bseries}. Since $A(0,v)=a_0(v)$, the functions $A(u,v)$ and $B(u,v)$ are therefore uniquely determined by the restriction of $A$ to $u=0$.

Proceeding as above, substitution of the series expansions for $C$ and $D$ into \eqref{eq:system-CD} and comparison of the coefficients of equal powers of $u$ yield

\begin{align*}
(s+1)c_{s+1}(v) + d_s'(v) &=0, \\ 
v\,c_s'(v)-s\,d_s(v) &=-\frac{2n + m - 1}{2}\,c_s(v)+(k+1)d_s(v), 
\end{align*}
for $s\ge0$. The second relation can be rewritten as 
\begin{equation}\label{eq:dseries}
d_s(v) = \frac{1}{k+s+1} \left( v\,c_s'(v) +\frac{2n+m-1}{2}\,c_s(v) \right). 
\end{equation}
Substituting the above expression into the first relation, we obtain 
\begin{equation}\label{eq:cseries}
c_{s+1}(v) = -\frac{1}{(s+1)(k+s+1)} \left( v\,c_s''(v) +\frac{2n+m+1}{2}\,c_s'(v) \right). 
\end{equation}

Observe that the sequences of functions $\{c_s(v)\}_{s\ge0}$ and $\{d_s(v)\}_{s\ge0}$ are uniquely determined by the initial function $c_0(v)$. Since $C(0,v)=c_0(v)$, it follows that the functions $C(u,v)$ and $D(u,v)$ are uniquely determined by the restriction of $C$ to $u=0$.

\begin{example}
Assume that
\[a_0(v)=v^p, \quad p\in\mathbb{R}.\]
Then it follows from \eqref{eq:aseries} that 
\[a_s(v) = (-1)^s \frac{k!}{s!(k+s)!} (p)_s \left(p+n+\frac{m-3}{2}\right)_s v^{p-s}, \]
where 
\[(x)_s=x(x-1)\cdots(x-(s-1))\] 
denotes the falling factorial. Using \eqref{eq:bseries}, we obtain
\[b_s(v) = \frac{(-1)^{s+1}k!}{2\,s!(k+s+1)!} (p)_{s+1} \left(p+n+\frac{m-3}{2}\right)_s v^{p-s-1}. \]
Consequently, 
\[A(u,v) = v^p \sum_{s=0}^{\infty}\frac{k!}{s!(k+s)!} (p)_s \left(p+n+\frac{m-3}{2}\right)_s \left(-\frac{u}{v}\right)^s, \]
Using
\[(x)_s=(-1)^s(-x)^{(s)}, \]
where
\[ (x)^{(s)}=x(x+1)\cdots(x+(s-1))\]
denotes the rising factorial, we obtain 
\[A(u,v) = v^p \sum_{s=0}^{\infty}\frac{1}{s!(k+1)^{(s)}} (-p)^{(s)} \left(-p-n-\frac{m-3}{2}\right)^{(s)} \left(-\frac{u}{v}\right)^s.\]
The latter series is recognized as a Gauss hypergeometric series,
\[ {}_2F_1(a,b;c;z) = \sum_{s=0}^{\infty} \frac{(a)^{(s)}(b)^{(s)}}{(c)^{(s)}} \frac{z^s}{s!}. \]
Hence,
\[A(u,v) = v^p\, {}_2F_1 \!\left( -p,\, -p-n-\frac{m-3}{2};\, k+1;\, -\frac{u}{v} \right). \]
Similarly, one obtains
\[B(u,v) = -\frac{p}{2(k+1)} v^{p-1} \,{}_2F_1 \!\left( 1-p,\, -p-n-\frac{m-3}{2};\, k+2;\, -\frac{u}{v} \right). \]
In particular, if $p\in\mathbb{N}$, then both hypergeometric series terminate, and hence $A(u,v)$ and $B(u,v)$ reduce to polynomials in the variables $u$ and $v$.
\end{example}

\begin{example}
Suppose that
\[c_0(v)=v^p, \quad p\in\mathbb{R}\]
Relations \eqref{eq:dseries} and \eqref{eq:cseries} yield
\begin{align*}
c_s(v) &= (-1)^s \frac{k!}{s!(k+s)!} (p)_s \left(p+n+\frac{m-1}{2}\right)_s v^{p-s},\\
d_s(v) &= (-1)^s \frac{k!}{s!(k+s+1)!} (p)_s \left(p+n+\frac{m-1}{2}\right)_{s+1} v^{p-s}.
\end{align*}
Hence, the functions $C(u,v)$ and $D(u,v)$ admit the hypergeometric representations
\[C(u,v) = v^p\, {}_2F_1 \!\left( -p,\, -p-n-\frac{m-1}{2};\, k+1;\, -\frac{u}{v} \right)\]
and
\[D(u,v) = \frac{p+n+\frac{m-1}{2}}{k+1}\, v^p\, {}_2F_1 \!\left( -p,\, -p-n-\frac{m-3}{2};\, k+2;\, -\frac{u}{v} \right).\]
\end{example}

\section{Biaxial monogenic plane waves}

Another way to characterize the operators $\mathrm{CK}_{z,k}$ is by studying their action on polynomials. It suffices to determine their effect on the monomial basis
\[\underline{y}^{\underline{\alpha}} = x_2^{\alpha_2}\cdots x_m^{\alpha_m}, \qquad \underline{\alpha}=(\alpha_2,\ldots,\alpha_m)\in\mathbb{N}^{m-1}, \qquad |\underline{\alpha}|=p\in\mathbb{N}. \]
For this purpose, we make use of the monomial reproducing kernel
\[K_p(\underline{y},\underline{t}) = \frac{\langle \underline{y},\underline{t}\rangle^p}{p!}, \quad \underline{t} = \sum_{j=2}^m t_j e_j,\]
which satisfies 
\[\partial_{\underline{t}}^{\underline{\alpha}}\,K_p(\underline{y},\underline{t}) = \underline{y}^{\underline{\alpha}}, \]
where 
\[\partial_{\underline{t}}^{\underline{\alpha}} = \partial_{t_2}^{\alpha_2}\cdots \partial_{t_m}^{\alpha_m}. \]
A straightforward computation yields
\[\mathrm{CK}_{z,k}\left[K_p(\underline{y},\underline{t})\right](X)=W_{p,k}(z,\underline{y},\underline{t}),\]
with
\[W_{p,k}(z,\underline{y},\underline{t})=z^{k} W_{p,k}^{(1)}(z,\underline{y},\underline{t}) + \bar z^{\,k+1}\,\underline{t}\, W_{p,k}^{(2)}(z,\underline{y},\underline{t})\]
and
\begin{align*}
W_{p,k}^{(1)}(z,\underline{y},\underline{t}) &= \sum_{s=0}^{\lfloor p/2\rfloor} \frac{(-1)^s\left(|z|\,|\underline{t}|\right)^{2s} \langle \underline{y},\underline{t}\rangle^{p-2s}} {2^{2s}\,s!\,(k+s)!\,(p-2s)!},\\
W_{p,k}^{(2)}(z,\underline{y},\underline{t}) &= \sum_{s=0}^{\left\lfloor (p-1)/2\right\rfloor} \frac{(-1)^{s+1}\left(|z|\,|\underline{t}|\right)^{2s} \langle \underline{y},\underline{t}\rangle^{p-2s-1}} {2^{2s+1}\,s!\,(k+s+1)!\,(p-2s-1)!},
\end{align*}
where $\lfloor x \rfloor$ denotes the floor function.

Combining the above observations, we arrive at the following theorem.

\begin{theorem}
Let 
\[H_p(\underline{y}) = \sum_{|\underline{\alpha}|=p} \underline{y}^{\underline{\alpha}}\,a_{\underline{\alpha}}, \quad a_{\underline{\alpha}}\in\mathbb{R}_{0,m}, \]
be a homogeneous polynomial in $\mathbb{R}^{m-1}$ of degree $p$. Then
\[\mathrm{CK}_{z,k}\left[H_p(\underline{y})\right](X)=\sum_{|\underline{\alpha}|=p}\left(\partial_{\underline{t}}^{\underline{\alpha}}\,W_{p,k}(z,\underline{y},\underline{t})\right)a_{\underline{\alpha}}.\]
\end{theorem}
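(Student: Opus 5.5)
The plan is to reduce everything to linearity of $\mathrm{CK}_{z,k}$ and the fact that it commutes with differentiation in the auxiliary variable $\underline{t}$. Two things must then be checked:
\[
\mathrm{CK}_{z,k}[K_p(\underline{y},\underline{t})](X)=W_{p,k}(z,\underline{y},\underline{t}),
\qquad
\partial_{\underline{t}}^{\underline{\alpha}}K_p(\underline{y},\underline{t})=\underline{y}^{\underline{\alpha}}\quad (|\underline{\alpha}|=p),
\]
and then the two facts are combined.

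\emph{The kernel identity.} Write $K_p(\underline{y},\underline{t})=\frac{1}{p!}\bigl(\sum_{j\ge2} y_jt_j\bigr)^p$ with $y_j=x_j$ and expand by the multinomial theorem. The coefficient of $\underline{t}^{\underline{\alpha}}$ is $\underline{y}^{\underline{\alpha}}/\underline{\alpha}!$, and $\partial_{\underline{t}}^{\underline{\alpha}}\,\underline{t}^{\underline{\beta}}=\underline{\alpha}!\,\delta_{\underline{\alpha}\underline{\beta}}$ whenever $|\underline{\beta}|=|\underline{\alpha}|=p$. This gives the identity directly.

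\emph{The formula for $W_{p,k}$.} Although the paper calls it straightforward, I would verify it from \eqref{eq:CKzk}, and I expect this to be the main computational step. Set $w=\langle\underline{y},\underline{t}\rangle$. Since $\partial_{\underline{y}}w=\underline{t}$ and $\partial_{\underline{y}}\underline{t}=0$, we get
\[
\partial_{\underline{y}}\,\frac{w^q}{q!}=\underline{t}\,\frac{w^{q-1}}{(q-1)!},
\qquad
\partial_{\underline{y}}^{2}\,\frac{w^q}{q!}=\underline{t}^2\,\frac{w^{q-2}}{(q-2)!}=-|\underline{t}|^2\,\frac{w^{q-2}}{(q-2)!}.
\]
Hence
\[
\partial_{\underline{y}}^{2s}K_p=(-1)^s|\underline{t}|^{2s}\,\frac{w^{p-2s}}{(p-2s)!},
\qquad
\partial_{\underline{y}}^{2s+1}K_p=(-1)^s|\underline{t}|^{2s}\,\underline{t}\,\frac{w^{p-2s-1}}{(p-2s-1)!},
\]
and both vanish once the exponent of $w$ would become negative, which truncates the sums at $\lfloor p/2\rfloor$ and $\lfloor (p-1)/2\rfloor$. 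Substituting into \eqref{eq:CKzk} gives exactly $z^kW^{(1)}_{p,k}+\bar z^{k+1}\underline{t}\,W^{(2)}_{p,k}$. The sign $(-1)^{s+1}$ in $W^{(2)}$ comes from the minus sign in front of the second sum, and the factor $2^{2s+1}(k+s+1)!$ comes from its denominator. The order of the factors $\bar z^{k+1}\underline{t}$ is the one produced by \eqref{eq:CKzk}, since $\bar z^{k+1}$ stands to the left of $\partial_{\underline{y}}^{2s+1}g_k$.

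\emph{Combining.} $\mathrm{CK}_{z,k}$ is additive and right-linear over constant Clifford coefficients: $\partial_{\underline{y}}$ acts from the left, so $\partial_{\underline{y}}^j(\underline{y}^{\underline{\alpha}}a_{\underline{\alpha}})=(\partial_{\underline{y}}^j\underline{y}^{\underline{\alpha}})\,a_{\underline{\alpha}}$, and $z^k$, $\bar z^{k+1}$ multiply from the left. Therefore
\[
\mathrm{CK}_{z,k}[H_p]=\sum_{|\underline{\alpha}|=p}\mathrm{CK}_{z,k}[\underline{y}^{\underline{\alpha}}]\,a_{\underline{\alpha}}.
\]
Moreover, $\partial_{\underline{t}}^{\underline{\alpha}}$ commutes with $\partial_{\underline{y}}$ and with multiplication by functions of $z$: the coefficients are constant and the variables are independent. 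This is legitimate termwise because for fixed $p$ all sums are finite polynomials, so no convergence issue arises. Thus
\[
\mathrm{CK}_{z,k}[\underline{y}^{\underline{\alpha}}]
=\mathrm{CK}_{z,k}\bigl[\partial_{\underline{t}}^{\underline{\alpha}}K_p\bigr]
=\partial_{\underline{t}}^{\underline{\alpha}}\,\mathrm{CK}_{z,k}[K_p]
=\partial_{\underline{t}}^{\underline{\alpha}}W_{p,k},
\]
which is the claim.
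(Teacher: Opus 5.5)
Your proposal is correct and follows the same route as the paper. You combine the reproducing property $\partial_{\underline{t}}^{\underline{\alpha}}K_p=\underline{y}^{\underline{\alpha}}$, the identity $\mathrm{CK}_{z,k}[K_p]=W_{p,k}$, right-linearity of $\mathrm{CK}_{z,k}$ in the coefficients $a_{\underline{\alpha}}$, and the commutation of $\partial_{\underline{t}}^{\underline{\alpha}}$ with $\mathrm{CK}_{z,k}$, and you also correctly carry out the computation of $W_{p,k}$ that the paper only calls straightforward.
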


More generally, the operators $\mathrm{CK}_{z,k}$ may be applied to arbitrary $\mathbb{R}$-valued real-analytic functions of the scalar product $\langle \underline{y}, \underline{t} \rangle$.

\begin{example}
Consider the initial function $e^{\langle \underline{y},\underline{t}\rangle}$. Then
\[\mathrm{CK}_{z,k}\left[e^{\langle \underline{y},\underline{t}\rangle}\right](X) =e^{\langle \underline{y},\underline{t}\rangle}\left(z^{k}\sum_{s=0}^{\infty} \frac{(-1)^s\left(|z|\,|\underline{t}|\right)^{2s}}{2^{2s}s!(k+s)!}-\bar{z}^{k+1}\,\underline{t}\, \sum_{s=0}^{\infty} \frac{(-1)^{s}\left(|z|\,|\underline{t}|\right)^{2s}}{2^{2s+1}s!(k+s+1)!}\right).\]
By means of the Bessel functions of the first kind
\[J_k(x) = \sum_{s=0}^{\infty} \frac{(-1)^s}{s!(k+s)!} \left(\frac{x}{2}\right)^{2s+k},\]
we may rewrite the preceding function as
\[\mathrm{CK}_{z,k}\left[e^{\langle \underline{y},\underline{t}\rangle}\right](X) =\left(\frac{2}{|z|\,|\underline{t}|}\right)^ke^{\langle \underline{y},\underline{t}\rangle}\left(z^{k}J_k\left(|z|\,|\underline{t}|\right)-\bar{z}^{k+1}\,\frac{\underline{t}}{|z|\,|\underline{t}|}\,J_{k+1}\left(|z|\,|\underline{t}|\right)\right).\]
\end{example}

The explicit forms of 
\[\mathrm{CK}_{z,k}\left[\langle \underline{y},\underline{t}\rangle^p\right](X) \quad \text{and} \quad \mathrm{CK}_{z,k}\left[e^{\langle \underline{y},\underline{t}\rangle}\right](X)\]
naturally motivate the introduction of the following class of biaxial monogenic plane wave functions:
\begin{equation}\label{MonPlaneWaves}
G(z,\underline{y},\underline{t})=z^{k}A(u,v) + \bar{z}^{k+1}\,\underline{t}\, B(u,v),
\end{equation}
where
\[u=\frac{\left(|z|\,|\underline{t}|\right)^2}{2},\qquad v=\langle \underline{y},\underline{t}\rangle,\]
and $A$ and $B$ are $\mathbb{R}$-valued continuously differentiable functions.

\begin{theorem}
The function given by \eqref{MonPlaneWaves} is monogenic if and only if the functions $A$ and $B$ satisfy
\begin{equation}\label{PWsystem}
\begin{aligned}
\partial_uA - \partial_vB &= 0\\[4pt]
\partial_v A + 2u\, \partial_uB &= -2(k+1)B.
\end{aligned}
\end{equation}
\end{theorem}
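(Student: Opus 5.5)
Following the proof of the previous theorem, I will apply $\partial_X = 2\partial_{\bar z} + \partial_{\underline{y}}$ to $G$ term by term and sort the result by its Clifford-algebraic type. The needed derivatives of the auxiliary variables are
\[
2\partial_{\bar z}|z|^2 = 2z, \qquad 2\partial_{\bar z}u = z|\underline{t}|^2, \qquad \partial_{\underline{y}}v = \underline{t}, \qquad 2\partial_{\bar z}\bar z^{k+1} = 2(k+1)\bar z^{k}.
\]
Also $2\partial_{\bar z} z^k = 0$, and $\partial_{\bar z}$ commutes past the constant vector $\underline{t}$: since $\underline{t}$ involves only $e_2,\dots,e_m$, it anticommutes with $e_1$, but $\partial_{\bar z}$ acts from the left before $\underline{t}$ appears. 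For $\partial_{\underline{y}}$, I will use that $\partial_{\underline{y}}$ applied to a scalar function $\phi(v)$ gives $\underline{t}\,\partial_v\phi$.

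\textbf{The $\partial_{\bar z}$ terms.} Since $z^k$ is holomorphic, $2\partial_{\bar z}(z^k A) = z^{k+1}|\underline{t}|^2\,\partial_uA$. For the second term,
\[
2\partial_{\bar z}\bigl(\bar z^{k+1}\underline{t}B\bigr) = 2(k+1)\bar z^k\underline{t}B + \bar z^{k+1}z|\underline{t}|^2\,\underline{t}\,\partial_uB .
\]
Using $\bar z z|\underline{t}|^2 = 2u$, this becomes $\bar z^{k}\underline{t}\bigl(2(k+1)B + 2u\,\partial_uB\bigr)$.

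\textbf{The $\partial_{\underline{y}}$ terms.} These are the delicate ones, because $\partial_{\underline{y}}$ must be moved past $z^k$ and $\bar z^{k+1}$. Since $e_j$ anticommutes with $e_1$ for $j\ge2$, we have $e_j z = \bar z e_j$. Hence $\partial_{\underline{y}}(z^k A) = \bar z^{k}\,\underline{t}\,\partial_vA$, exactly as in the earlier proof, where $\partial_{\underline{y}}(z^k\cdots)$ produced $\bar z^k$. Similarly,
\[
\partial_{\underline{y}}\bigl(\bar z^{k+1}\underline{t}B\bigr) = z^{k+1}\underline{t}^2\,\partial_vB = -z^{k+1}|\underline{t}|^2\,\partial_vB .
\]

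\textbf{Collecting.} Altogether,
\[
\partial_X G = z^{k+1}|\underline{t}|^2\bigl(\partial_uA - \partial_vB\bigr) + \bar z^{k}\,\underline{t}\,\bigl(\partial_vA + 2u\,\partial_uB + 2(k+1)B\bigr).
\]
Sufficiency of \eqref{PWsystem} is then immediate. For necessity, with $\underline{t}\neq0$, I need the two brackets to vanish separately. The functions $z^{k+1}$ times a scalar and $\bar z^k\underline{t}$ times a scalar lie in different subspaces: the first lies in $\mathrm{span}\{1,e_1\}$, while the second lies in $\mathrm{span}\{e_j, e_1e_j\}_{j\ge2}$. Hence each bracket must vanish wherever its prefactor is nonzero, i.e. for $z\neq0$, and by continuity everywhere. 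Since $u$ and $v$ range over an open set as $z$ and $\underline{y}$ vary, the system holds identically.

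The main obstacle is the ordering issue in the $\partial_{\underline{y}}$ step, namely passing $e_j$ through powers of $z$, and I would check the sign there carefully. The independence argument for necessity is routine.
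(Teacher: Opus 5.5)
Your proposal is correct and follows the paper's proof essentially verbatim. Both compute $2\partial_{\bar z}G$ and $\partial_{\underline{y}}G$ term by term and arrive at the same identity $\partial_X G = z^{k+1}|\underline{t}|^2(\partial_uA-\partial_vB)+\bar z^{k}\underline{t}(\partial_vA+2u\partial_uB+2(k+1)B)$. Your splitting into $\mathrm{span}\{1,e_1\}$ and $\mathrm{span}\{e_j,e_1e_j\}_{j\ge2}$ (with $\underline{t}\neq0$) just makes explicit the necessity step the paper calls immediate.

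One wording fix: $\partial_{\bar z}$ does not commute past $\underline{t}$; in fact $\partial_{\bar z}(\underline{t}f)=\underline{t}\,\partial_z f$. Your displayed formula nevertheless keeps the factor $z$ to the left of $\underline{t}$, which is the correct placement, so the computation is unaffected.
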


\begin{proof}
A direct computation yields
\begin{align*}
2\partial_{\bar z}G &=z^{k}\left(\partial_uA\right)z|\underline{t}|^2+2(k+1)\bar{z}^{k}\,\underline{t}\,B+\bar{z}^{k+1}\left(\partial_uB\right)z|\underline{t}|^2\underline{t}\\
2\partial_{\bar z}G &= z^{k+1}|\underline{t}|^2\partial_uA+2(k+1)\bar{z}^{k}\,\underline{t}\,B+2\bar{z}^{k}\,\underline{t}\,u\partial_uB
\end{align*}
and
\begin{align*}
\partial_{\underline{y}}G&=\left(\partial_vA\right)\underline{t}\,z^k+\left(\partial_vB\right)\underline{t}\,\bar{z}^{k+1}\,\underline{t}\\
\partial_{\underline{y}}G &=\bar{z}^k\,\underline{t}\,\partial_vA-{z}^{k+1}|\underline{t}|^2\partial_vB.
\end{align*}
Therefore
\[
\partial_X G = z^{k+1}|\underline{t}|^2\big(\partial_uA - \partial_vB\big)+\bar{z}^{k}\,\underline{t}\big(\partial_vA+2u\partial_uB+2(k+1)B\big),
\]
from which the result follows immediately.
\end{proof}

As in the previous section, we employ the power series method to solve system \eqref{PWsystem}. We seek solutions of the form
\[A(u,v)=\sum_{s=0}^{\infty} u^s a_s(v),\qquad B(u,v)=\sum_{s=0}^{\infty} u^s b_s(v), \]
where the coefficients $a_s$ and $b_s$ depend only on $v$.

Substituting these series into \eqref{PWsystem} and comparing coefficients of equal powers of $u$, we arrive at 
\[(s+1)a_{s+1}(v)=b_s'(v), \qquad a_s'(v)=-2(k+s+1)b_s(v), \qquad s\ge 0. \]
Eliminating $b_s(v)$, we obtain the recurrence relation 
\[a_{s+1}(v) = -\frac{a_s''(v)}{2(s+1)(k+s+1)}, \qquad s\ge 0. \]
Iterating gives
\[a_s(v) = \frac{(-1)^sk!}{2^s s!(k+s)!}\, a_0^{(2s)}(v), \]
and consequently
\[b_s(v) = \frac{(-1)^{s+1}k!}{2^{s+1}s!(k+s+1)!}\, a_0^{(2s+1)}(v). \]
Clearly, the functions $A(u,v)$ and $B(u,v)$ are uniquely determined by the initial function $a_0(v)=A(0,v)$.

\end{document}